\newtheorem{theorem}{Theorem}[section]
\newtheorem{definition}[theorem]{Definition}
\newtheorem{proposition}[theorem]{Proposition}
\def\dom{\operatorname{dom}}
\def\codom{\operatorname{cod}}
\def\PO{\operatorname{PO}}
\newcommand{\To}{\longrightarrow}
\newcommand{\U}{\mathscr {U}}
\newcommand{\N}{\mathbb{N}}
\begin{document}

%

\title{Universal disposition is not a 3-space property}

\author{Jes\'us M. F. Castillo}
\email{castillo@unex.es}

\author{Manuel Gonz\'alez}
\email{manuel.gonzalez@unican.es}

\author{Marilda A. Sim\~{o}es}
\email{simoes@mat.uniroma1.it}

\address[affil1]{Instituto de Matem\'aticas,
Universidad de Extremadura, 06071 Badajoz, Spain}
\address[affil2]{Departamento de Matem\'{a}ticas, Universidad de Cantabria,
Avenida de los Castros s/n, 39071-Santander, Spain}
\address[affil3]{Dipartimento di Matematica "G. Castelnuovo",
Universit\'a di Roma "La Sapienza", P.le A. Moro 2,
00185 Roma, Italia}

\newcommand{\AuthorNames}{F. Author, S. Author}

\newcommand{\FilMSC}{Primary 46B07; Secondary 46B03.}
\newcommand{\FilKeywords}{Banach space of universal disposition, three-space property.}
\newcommand{\FilCommunicated}{Dragan S. Djordjevi\'c.}
\newcommand{\FilSupport}{The research of the first two authors was partially supported by project
MINCIN MTM2016-76958. The research of the first author was supported in part by Project IB16056 de
la Junta de Extremadura.}

\begin{abstract}
We prove that (almost) universal disposition and separable universal disposition of Banach spaces
are not $3$-space properties.
\end{abstract}

\maketitle

\makeatletter
\renewcommand\@makefnmark%
{\mbox{\textsuperscript{\normalfont\@thefnmark)}}}
\makeatother

\section{Introduction}

Given a class $\mathfrak M$ of Banach spaces, a Banach space $X$ is said to be of (almost) universal disposition
with respect to $\mathfrak M$ if given $A, B \in \mathfrak M$, into isometries $u: A\to X$ and $\imath:A\to B$ (and
($\varepsilon>0$) there is an into isometry (an into $(1+\varepsilon$-isometry) $u': B\to X$  such that $u=u'\imath$.
A Banach space of (almost) universal disposition with respect to the class $\mathfrak F$ of finite dimensional spaces
--respectively the class $\mathfrak S$ of separable spaces-- will be simply called a space of (almost) universal
disposition --respectively of separable universal disposition--.
The Gurariy space $\mathcal G$ \cite{gurari} is the only separable Banach space of almost universal disposition,
up to isometries \cite{lusky,kuso}.
Under {\sf CH}, all spaces of separable universal disposition having density character at most $\mathfrak c$ are
isometric (see \cite{accgm2,accgmLN}) and we will denote this unique specimen by $\mathscr F_1$.
However, there are different non isomorphic spaces of universal disposition \cite{accgmLN,castsimo}; and also
different spaces of separable universal disposition outside {\sf CH} \cite{accgmLN}.
\medskip

The Gurariy space has received extensive attention in the literature  \cite{Gurariinew,gevo,godekalt,lazarlind,p-w,woj,lusky}.
Gurariy conjectured the existence of Banach spaces of universal disposition, as well as of spaces of universal disposition
with respect to the class $\mathfrak S$ of separable spaces.
This conjecture was proved to be true in  \cite{accgm2}, where a general method to construct spaces of universal
disposition with respect to different classes was presented. In particular, it was shown that the space that Gurariy
conjectured is isometric to the Fra\"iss\'e limit, in the category of separable Banach spaces and into isometries,
constructed by  Kubi\'s \cite{kubis}. The papers \cite{cgk,bagekuba} extend the methods of \cite{accgm2,kubis} to
quasi-Banach and Fr\'echet spaces. Further aspects of spaces of universal disposition have been studied
in \cite{castsimo,castsimomore,kubisfo}.
The paper \cite{castmoreka} introduces and studies the properties of (almost) universal complemented disposition and
establishes the Kadets space \cite{kadec} as a Gurariy space in a different category.
The results of \cite{castmoreka} are extended in \cite{ccm} to cover the categories of $p$-Banach spaces but using a
Fra\"iss\'e like approach in the spirit of \cite{garbula}.
\medskip

Three-space properties is, on the other hand, a well established topic (see \cite{castgonz}). Recall that a property
$\mathcal P$ of Banach spaces is said to be a $3$-space property if whenever a subspace $Y$ and the corresponding
quotient $X/Y$ of a given Banach space $X$ have $\mathcal P$ then also the space $X$ must enjoy $\mathcal P$.
Our purpose in this paper is to show that (almost) universal disposition and separable universal disposition are not
a $3$-space properties. Because of the uniqueness of the Gurariy space this in particular means that there exists a
Banach space $\Omega$ not isomorphic to $\mathcal G$ containing a subspace isomorphic to $\mathcal G$ and such that
$\Omega/\mathcal G \simeq \mathcal G$.

\section{Background}

\subsection{Isomorphic properties of Banach spaces}

A Banach space $X$ has property $(V)$ of Pe\l czy\'nski if every operator from $X$ into any other Banach space is
either weakly compact or an isomorphism on some copy of $c_0$.  All $C(K)$ and Lindenstrauss spaces enjoy
Pe\l czy\'nski's property $(V)$ \cite{pelcunc}, which is not a 3-space property as it was shown in \cite{castgonzV}
--see also \cite{castgonz}-- and later in \cite{castsimoV}.

\subsection{Exact sequences}
An exact sequence $0 \to Y \to X \to Z \to 0$ of Banach spaces is a diagram formed by Banach spaces and linear
continuous operators in which the kernel of each operator coincides with the image of the preceding; the middle
space $X$ is also called a {\it twisted sum} of $Y$ and $Z$. By the open mapping theorem this means that
$Y$ is isomorphic to a subspace of $X$ and $Z$ is isomorphic to the corresponding quotient.
An exact sequence is said to split if the image of $Y$ in $X$ is complemented; i.e., it is equivalent to the trivial
sequence $0 \to Y \to Y \oplus Z \to Z \to 0$.
See \cite{accgmLN} for details.

\subsection{A device to construct spaces of (separable) universal disposition}

Given an isometry $u:A\to B$ and an operator $t: A\to E$ there is an extension of $t$ through $u$ at the
cost of embedding $E$ in a larger space, called the \emph{push-out} space, as it is showed in the diagram
$$
\begin{CD}
A @> u >> B\\
@Vt VV @VV t' V\\
E@> u' >> \PO
\end{CD}
$$
where $t'u=u't$. It is important to realize that the operator $u'$ is again an isometry and that $t'$ is
a contraction or an isometry if $t$ is.
Once a starting Banach space $X$ has been fixed, the input data we need for our construction are:
\begin{itemize}
\item A class $\mathfrak{M}$ of separable Banach spaces.
\item The family $\mathfrak{J}$ of all isometries acting between the elements of $\mathfrak{M}$.
\item A family $\mathfrak{L}$ of norm one $X$-valued operators defined on elements of $\mathfrak{M}$.
\end{itemize}
For any operator $s : A\to B$, we establish $\dom(s) = A$ and $\codom (s) = B$.
Notice that the codomain of an operator is usually larger than its range, and that the unique codomain of
the elements of $\mathfrak{L}$ is $X$.
Set $\Gamma=\{(u,t)\in \mathfrak{J}\times \mathfrak L: \dom u=\dom t\}$ and consider the Banach spaces of
summable families $\ell_1(\Gamma,\dom u)$ and $\ell_1(\Gamma, \codom u)$.
We have an obvious isometry
$$
\oplus\mathfrak{J} :\ell_1(\Gamma, \dom u)\To \ell_1(\Gamma, \codom u)
$$
defined by  $(x_{(u,t)})_{(u,t)\in\Gamma}\longmapsto (u(x_{(u,t)}))_{(u,t)\in\Gamma} $; and a contractive
operator
$$
\Sigma\mathfrak L :\ell_1(\Gamma, \dom u)\To X,
$$
given by $(x_{(u,t)})_{(u,t)\in\Gamma}\longmapsto \sum_{(u,t)\in\Gamma}t(x_{(u,t)})$.

Observe that the notation is slightly imprecise since both $\oplus\mathfrak{J}$ and $\Sigma\mathfrak L$
depend on $\Gamma$. We can form their push-out diagram
$$
\xymatrix{
\ell_1(\Gamma, \dom u)\ar[r]^{\oplus\mathfrak{J}} \ar[d]_{\Sigma\mathfrak L} & \ell_1(\Gamma, \codom u)\ar[d]\\
E \ar[r]^\imath &\PO
}
$$
We obtain in this way an isometric enlargement of $X$ such that for every $t: A\to X$ in $\mathfrak L$, the
operator $\imath t$ can be extended to an operator $t':B \to \PO$ through any embedding $u:A\to B$ in $\mathfrak{J}$
provided $\dom u=\dom t= A$.

In the next step we keep the family $\mathfrak{J}$ of isometries, replace the starting space $X$ by $\PO$ and
$\mathfrak L$ by a family of norm one operators $\dom u \to \PO$, $u\in \mathfrak J$, and proceed again.
\medskip

We start with $\mathcal M^0(X) = X$. The inductive step is as follows. Suppose we have constructed the directed
system $(\mathcal M^\alpha(X))_{\alpha<\beta}$, including the corresponding linking maps
$\imath_{(\alpha,\gamma)}: \mathcal M^\alpha(X)\To \mathcal M^\gamma(X)$ for $\alpha<\gamma<\beta$.
To define $\mathcal M^\beta(X)$ and the maps $\imath_{(\alpha,\beta)}: \mathcal M^\alpha(X) \To \mathcal M^\beta(X)$
we consider separately two cases, as usual: if $\beta$ is a limit ordinal, then we take $\mathcal M^\beta(X)$ as the
direct limit of the system $(\mathcal M^\alpha(X))_{\alpha<\beta}$ and
$\imath_{(\alpha,\beta)}: \mathcal M^\alpha(X) \To \mathcal M^\beta(X)$ the natural inclusion map.
Otherwise $\beta=\alpha + 1$ is a successor ordinal and we construct $\mathcal M^{\beta}(X)$ applying the push-out
construction as above with the following data:  $\mathcal M^\alpha(X)$ is the starting space, $\mathfrak J$ keeps
being the set of all isometries acting between the elements of $\mathfrak M$ and $\mathfrak L_\alpha$ is the family
of all isometries $t:S\to \mathcal M^\alpha(X)$, where $S\in \mathfrak M$.
\medskip

We then set $\Gamma_\alpha=\{(u,t)\in \mathfrak J\times \mathfrak L_\alpha:\dom u=\dom t\}$ and make the push-out
\begin{equation}\label{see0}
\begin{CD}
\ell_1(\Gamma_\alpha, \dom u)@> \oplus\mathfrak I_\alpha >> \ell_1(\Gamma_\alpha, \codom u)\\
@V \Sigma\mathfrak L_\alpha VV @VVV\\
\mathcal M^\alpha (X)@>>> \PO
\end{CD}
\end{equation}
thus obtaining ${\mathcal M}^{\alpha + 1}(X)=\PO$. The embedding $\imath_{(\alpha,\beta)}$ is the lower arrow
in the above diagram; by composition with $\imath_{(\alpha,\beta)}$ we get the embeddings
$\imath_{(\gamma,\beta)} =\imath_{(\alpha, \beta)}\imath_{(\gamma, \alpha)}$, for all $\gamma < \alpha$.
\medskip

Our construction will conclude at the first uncountable ordinal $\omega_1$ providing a Banach space that we denote
$\mathcal{M}^{\omega_1}(X)$.
\medskip

The choice $\mathfrak M = \mathfrak F$ of finite dimensional spaces will provide the space of universal disposition
$\mathscr  F^{\omega_1}(X)$. The choice $\mathfrak M = \mathfrak S$ of separable spaces will provide the space of
separable universal disposition $\mathcal S^{\omega_1}(X)$ (cf. \cite[Chapter 3]{accgmLN}).

Although spaces of universal disposition need not be isomorphic, it was proved in \cite[Theorem 3.23]{accgmLN} that
all spaces $\mathscr F^{\omega_1}(X)$ are isometric for separable $X$ and that all spaces $\mathcal S^{\omega_1}(X)$
are isometric for $X$ having density character at most $\aleph_1$.
Consequently, it makes sense to use the following notation.

\begin{definition} $\;$
\begin{itemize}
\item $\mathscr F(X)$ for $\mathscr  F^{\omega_1}(X)$. We will simply write $\mathscr F_0$ to denote $\mathscr F(X)$
for separable $X$
\item $\mathscr G(X)$ for $\mathcal  S^{\omega_1}(X)$. We will simply write $\mathscr F_1$ to denote $\mathscr G(X)$
for $X$ having density character at most $\aleph_1$.
\end{itemize}
\end{definition}

\section{Universal disposition are not a $3$-space properties}

Observe that universal disposition, as well as all the other related properties, are geometrical properties; thus,
one has that the space $\mathcal G$ is not necessarily of almost universal disposition under an equivalent renorming.
Therefore, to consider $3$-space problems one need to think about the associated isomorphic properties: namely,
to be isomorphic to a space of (almost) universal (separable) disposition.
So we will do without this further explicit mention.

\begin{theorem}
Universal and almost universal disposition are not $3$-space properties.
\end{theorem}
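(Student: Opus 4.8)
The plan is to pass to the isomorphic formulation and use a single isomorphic invariant to separate the middle space from the class. Every space of (almost) universal disposition is an $\mathcal{L}_{\infty,1+}$-space, hence a Lindenstrauss space, and therefore enjoys Pe\l czy\'nski's property $(V)$ recalled in the Background. Since $(V)$ is an isomorphic invariant, it suffices to exhibit an exact sequence $0\To U\To \Omega\To U\To 0$ in which $U$ is of (almost) universal disposition while the twisted sum $\Omega$ fails $(V)$: such an $\Omega$ cannot be isomorphic to any space of (almost) universal disposition, which is exactly what the theorem asks. For almost universal disposition I take $U=\mathcal G$, and the uniqueness of the Gurariy space lets me phrase the outcome as in the Introduction, with a separable $\Omega\not\simeq\mathcal G$ satisfying $\Omega/\mathcal G\simeq\mathcal G$; for universal disposition I take $U$ to be any space of universal disposition. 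Note that arranging the sequence merely to be non-split would not suffice, since a non-split extension does not by itself fix the isomorphic type of its middle space; the invariant $(V)$ is what does the work.

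The two facts about $(V)$ I would lean on are that it is inherited by quotients (and by complemented subspaces), and that, by the Background, it is \emph{not} a three-space property, so there is an exact sequence $0\To A\To W\To B\To 0$ whose middle term $W$ lacks $(V)$, with $A$ separable and $B$ a quotient of $U$ (e.g.\ $B=c_0$, using that $U^*$ is an $L_1$-space). The idea is to transport this failure into the target class. Using that $U$ is isometrically universal for separable spaces, embed $A$ into $U$ and form the push-out of $A\hookrightarrow U$ and $A\to W$; this replaces the subspace $A$ by $U$ and yields $0\To U\To\PO\To B\To 0$. Then choose a quotient map $U\twoheadrightarrow B$ and form the pull-back
\[
\begin{CD}
\PB @>>> U\\
@VVV @VVV\\
\PO @>>> B
\end{CD}
\]
which replaces the quotient $B$ by $U$ and produces $\Omega=\PB$ sitting in $0\To U\To\Omega\To U\To 0$. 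Because pulling back along the surjection $U\twoheadrightarrow B$ keeps the left vertical map $\Omega\To\PO$ surjective, $\PO$ is a quotient of $\Omega$; by quotient-heredity of $(V)$ it then suffices to see that $\PO$ fails $(V)$.

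The main obstacle is precisely this last clause: showing that the failure of property $(V)$ survives the push-out. The non-weakly-compact, non-$c_0$-fixing operator witnessing $W\notin(V)$ must be reproduced on $\PO$, and this is not automatic, since $(V)$ is not subspace-hereditary and the copy of $W$ inside $\PO$ need be neither complemented nor a quotient. In homological language, one has to verify that the chosen class in $\operatorname{Ext}(B,A)$ is carried by the push-out along $A\hookrightarrow U$ to a class whose middle space still lacks $(V)$, rather than being absorbed by the extra room available in $U$. Pinning down which quotient or complemented subspace of $\PO$ carries the bad operator, and checking it neither becomes weakly compact nor begins to fix $c_0$, is the delicate heart of the argument.

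A cleaner alternative I would try in parallel is to dualise. Since $U^*$ is an $L_1$-space and $U\in(V)$ forces $U^*$ to be weakly sequentially complete, it is enough to produce the dual sequence $0\To U^*\To\Omega^*\To U^*\To 0$ with $\Omega^*$ \emph{not} weakly sequentially complete; this reduces matters to the known failure of weak sequential completeness as a three-space property for $L_1$-type ends. The price is that one must realise the required $L_1$-extension as the dual of a genuine $U$-extension, so the difficulty migrates from tracking $(V)$ to controlling the duality — but in either route the crux is the same: certifying that the bad behaviour of a known counterexample is inherited by the twisted sum of $U$ with itself.
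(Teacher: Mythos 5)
Your overall strategy coincides with the paper's up to a point: the invariant is the same (spaces of (almost) universal disposition are Lindenstrauss spaces, hence enjoy property $(V)$, which is isomorphic), and the goal is the same (a self-extension of the target space $U$ whose middle space fails $(V)$). But the step you yourself flag as ``the delicate heart of the argument'' is a genuine gap, and it is precisely where your route breaks down: the push-out along $A\hookrightarrow U$ only embeds $W$ \emph{isometrically} into $\PO$, and since $(V)$ is not inherited by subspaces, nothing forces $\PO$ to fail $(V)$ --- the bad operator on $W$ may well not extend or survive, and the class in $\operatorname{Ext}(B,U)$ could even be carried to one with a well-behaved middle space. The paper avoids push-outs and pull-backs entirely and replaces them with \emph{complementation}. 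It starts from the sequence $0\To C(\omega^\omega)\To \Omega\To c_0\To 0$ of \cite{ccky} with $\Omega$ failing $(V)$, observes that both ends are separable Lindenstrauss spaces and hence $1$-complemented in $\mathcal G$, writes $\mathcal G\simeq C(\omega^\omega)\oplus H$ and $\mathcal G\simeq c_0\oplus K$, and adds the trivial sequences $0\To H\To H\To 0\To 0$ and $0\To 0\To K\To K\To 0$ (this is the ``multiplying adequately on the left and on the right''). The result is $0\To\mathcal G\To\Omega\oplus H\oplus K\To\mathcal G\To 0$, and now $\Omega$ sits \emph{complemented} in the middle space; since $(V)$ passes to complemented subspaces, the failure of $(V)$ transfers for free. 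For universal disposition the same trick runs with $U=\mathscr F(C(\omega^\omega))\simeq\mathscr F_0$, where \cite[Proposition 2.1 and Lemma 3.2]{castsimo} supply the complemented copies of $C(\omega^\omega)$ and $c_0$. So the missing idea is: do not transport the extension homologically along an embedding; instead choose the target $U$ so that \emph{both ends} of the known counterexample are complemented in $U$, and sum with trivial sequences, so that the pathological middle space itself reappears as a complemented subspace of the new middle space.

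Your fallback via dualisation is not merely delicate --- it is impossible. Both ends $U^*$ of the putative dual sequence are $L_1$-spaces, hence weakly sequentially complete regardless of any property of $U$, and weak sequential completeness \emph{is} a three-space property (see \cite{castgonz}); consequently every twisted sum $\Omega^*$ of $U^*$ with itself is automatically weakly sequentially complete, and the counterexample you would need cannot exist.
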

\begin{proof}
We get from \cite{accgmLN} that a space of almost universal disposition is a Lindenstrauss space, and therefore
it enjoys property $(V)$. It follows from \cite{ccky}, see also \cite{castsimoV}, that there exist exact sequences
$$\begin{CD}
0 @>>> C(\omega^\omega) @>>> \Omega @>>> c_0 @>>>0
\end{CD}$$
in which $\Omega$ has not property (V). Since every separable Lindenstrauss space is $1$-complemented in
$\mathcal G$ and property $(V)$ is stable by products, multiplying adequately on the left and on the right one
can thus obtain an exact sequence
$$\begin{CD}
0 @>>> \mathcal G @>>> \Omega' @>>> \mathcal G @>>>0
\end{CD}$$
in which $\Omega'$ cannot have property $(V)$ and thus it cannot be isomorphic to $\mathcal G$.
In particular, it cannot be a space of almost universal disposition.

To prove that universal disposition is not a $3$-space property, we use \cite[Proposition 2.1]{castsimo} and
\cite[Lemma 3.2]{castsimo} to get that both $C(\omega^\omega)$ and $c_0$ are complemented in the space of universal
disposition $\mathscr F(C(\omega^\omega))$.
Therefore, multiplying adequately on the left and on the right one can obtain an exact sequence
$$\begin{CD}
0 @>>> \mathscr F(C(\omega^\omega)) @>>> \Omega'' @>>> \mathscr F(C(\omega^\omega)) @>>>0
\end{CD}$$
namely
$$\begin{CD}
0 @>>> \mathscr F_0 @>>> \Omega'' @>>> \mathscr F_0 @>>>0
\end{CD}$$
in which $\Omega''$ contains $\Omega$ complemented and thus it cannot have property $(V)$, which prevents it to be
a space of universal disposition.
\end{proof}

The strategy to show that separable universal disposition is not a 3-space property has to be different since
spaces of separable universal disposition do not contain complemented separable subspaces.

\begin{theorem}
Under {\sf CH}, separable universal disposition is not a $3$-space property.
\end{theorem}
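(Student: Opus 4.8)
The plan is to produce, under {\sf CH}, a nontrivial exact sequence
$$
\begin{CD}
0 @>>> \mathscr F_1 @>>> \Lambda @>>> \mathscr F_1 @>>> 0
\end{CD}
$$
whose middle term $\Lambda$ is not isomorphic to $\mathscr F_1$; by the uniqueness of $\mathscr F_1$ this shows that separable universal disposition is not a $3$-space property. As noted above, the device of the preceding theorem is unavailable: neither $C(\omega^\omega)$ nor $c_0$ is complemented in $\mathscr F_1$, so one cannot sandwich the non-$(V)$ sequence $0\to C(\omega^\omega)\to\Omega\to c_0\to 0$ between copies of $\mathscr F_1$ by multiplying on the left and on the right. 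Instead I would transplant that sequence \emph{inside} a self-extension of $\mathscr F_1$.

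First I would fix an isometric embedding $\jmath\colon C(\omega^\omega)\To\mathscr F_1$ (every separable space embeds isometrically into $\mathscr F_1$) and a quotient map $q\colon \mathscr F_1\To c_0$ (available since $\mathscr F_1$ is a non-separable Lindenstrauss space). Pulling $0\to C(\omega^\omega)\to\Omega\to c_0\to 0$ back along $q$ gives $0\to C(\omega^\omega)\to \PB\To\mathscr F_1\to 0$, and pushing this out along $\jmath$ gives $0\to\mathscr F_1\to\PO\To\mathscr F_1\to 0$ with $\Lambda:=\PO$; equivalently $\Lambda$ is the extension of $\mathscr F_1$ by $\mathscr F_1$ induced by the quasilinear map $\jmath\,\Phi\,q$, where $\Phi\colon c_0\to C(\omega^\omega)$ is a quasilinear map inducing the starting sequence. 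Using property $(V)$ of $\mathscr F_1$ (so that no copy of $c_0$ lifts the quotient $q$), one checks along the way that $\PB$ already fails property $(V)$, so the pathology does reach a self-extension of $\mathscr F_1$.

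The hard part will be to certify that $\Lambda\not\simeq\mathscr F_1$, and here the obvious invariants are useless. Because $\mathscr F_1$ is separably injective, the sequence splits over every separable subspace of the quotient; consequently the candidate witness $q\rho\colon\Lambda\To c_0$, with $\rho\colon\Lambda\To\mathscr F_1$ the quotient map, is an isomorphism on a lifted copy of $c_0$, and, more globally, one finds that $\Lambda$ inherits property $(V)$, separable injectivity and the $\mathcal L_\infty$ property from its ends. Thus $\Lambda$ has to be separated from $\mathscr F_1$ by an invariant tied to separable universal disposition itself, and the natural target is the very property singled out above: $\mathscr F_1$ has no complemented separable subspace, so it would suffice to find one in $\Lambda$. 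The cocycle $\jmath\Phi q$ takes values in the separable subspace $\jmath(C(\omega^\omega))$ and vanishes on $\ker q$, so the extension is nontrivial only in ``separable directions''; the real difficulty --- and the reason the plain pull-back/push-out is probably not enough --- is that this local triviality cannot be promoted to a bounded projection precisely because $\mathscr F_1$ has no complemented separable subspaces. I therefore expect the decisive step to be a refinement in which the twisting is spread across cofinally many stages of the transfinite push-out construction of $\mathscr F_1$, so that it stays globally nontrivial while never concentrating on a separable piece; only then should one be able to read off either a complemented separable subspace of $\Lambda$ or a separable isometry into $\Lambda$ admitting no almost-isometric extension. {\sf CH} is used only to identify both ends of the sequence with the single space $\mathscr F_1$.
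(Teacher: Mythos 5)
Your construction provably collapses, and for exactly the reason your own second paragraph flirts with. Your $\Lambda$ is the push-out of the pull-back sequence $0\to C(\omega^\omega)\to \PB\to\mathscr F_1\to 0$ along $\jmath\colon C(\omega^\omega)\to\mathscr F_1$. But $\mathscr F_1$, being of separable universal disposition, is \emph{universally separably injective}: every operator from a separable space into $\mathscr F_1$ extends to any superspace. Since $C(\omega^\omega)$ is separable, $\jmath$ extends to an operator $\PB\to\mathscr F_1$, and a push-out sequence splits precisely when the pushed-out operator extends to the middle space. Hence $0\to\mathscr F_1\to\Lambda\to\mathscr F_1\to 0$ splits and $\Lambda\simeq\mathscr F_1\oplus\mathscr F_1\simeq\mathscr F_1$ under {\sf CH} (the last isomorphism is the paper's Proposition in Section 4, item (3)). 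The same obstruction kills every variant in which the cocycle ``takes values in a separable subspace'': any self-extension of $\mathscr F_1$ obtained by pushing out a sequence with separable kernel is trivial. So your claim that the local triviality ``cannot be promoted to a bounded projection'' is backwards --- universal separable injectivity is a global extension property and promotes it automatically. Your closing proposal (spreading the twisting across cofinally many stages of the transfinite construction) is a research plan, not an argument: no construction is exhibited and no invariant separating $\Lambda$ from $\mathscr F_1$ is verified. (The side claim that $\PB$ fails property $(V)$ is also unproved, but it is moot.)

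The paper's proof avoids separable kernels altogether, which is the key structural point you missed. It uses universal separable injectivity itself as the distinguishing invariant and imports the hard ingredient from the corrigendum \cite{accgmLNc}: under {\sf CH} there is an exact sequence $0\to C[0,1]_\U\to\Psi\to\ell_\infty\to 0$ with $\Psi$ \emph{not} universally separably injective --- note the kernel $C[0,1]_\U$ is nonseparable, which is why the splitting phenomenon above does not intervene. Since $C[0,1]$ is complemented in $\mathcal G$, the ultrapower $C[0,1]_\U$ is complemented in $\mathcal G_\U$; and $\mathcal G_\U$ contains $\ell_\infty$, hence (by injectivity of $\ell_\infty$) contains it complemented. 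Multiplying on the left and on the right then yields $0\to\mathcal G_\U\to\Psi\oplus\mathcal G_\U\to\mathcal G_\U\to 0$, whose middle space contains $\Psi$ complemented, hence fails universal separable injectivity and so cannot be of separable universal disposition; under {\sf CH} one has $\mathcal G_\U\simeq\mathscr F_1$. In short: the heavy lifting --- producing a twisted sum defeating universal separable injectivity --- must be done at the nonseparable level, and your attempt to rebuild it from the $C(\omega^\omega)$/$c_0$ sequence cannot succeed.
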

\begin{proof}
We need here the results of \cite{accgm2} asserting that a space of separable universal disposition enjoy a property
called universal separable injectivity \cite{accgmLN}.
A Banach space $X$ is universally separably injective if every operator $S\to X$ from a separable space $S$ can be
extended to any superspace.
It is obvious that complemented subspaces of universally separably injective spaces enjoy the same property.
Ultrapowers of $\mathcal L_\infty$ spaces are also universally separably injective.
The main result of \cite[Theorem 1 (f)]{accgmLNc} asserts that, under {\sf CH}, universal separable injectivity is
not a $3$-space property. More precisely, there exists an ultrapower $C[0,1]_\U$ of $C[0,1]$ and an exact sequence
$$\begin{CD}
0 @>>> C[0,1]_\U @>>> \Psi @>>> \ell_\infty@>>>0
\end{CD}$$
in which the space $\Psi$ is not universally separably injective.
Now, since $C[0,1]$ is complemented in $\mathcal G$ then $C[0,1]_\U$ is complemented in $\mathcal G_\U$.

On the other hand, $\mathcal G_\U$, as any ultrapower of an $\mathcal L_\infty$ space, contains $\ell_\infty$,
and it must then contain it complemented.
Thus, multiplying adequately on the left and on the right one gets an exact sequence
$$\begin{CD}
0 @>>> \mathcal G_\U @>>> \Psi\oplus \mathcal G_\U @>>> \mathcal G_\U@>>>0
\end{CD}$$
in which the twisted sum space $\Psi\oplus \mathcal G_\U $ cannot be of separable universal disposition since
it is not even universally separably injective.
\end{proof}

\section{Stability by products}

Since the properties of (almost) universal disposition and separable universal disposition are not $3$-space
properties, it makes sense to consider their (isomorphic) stability by products.
We have been unable to determine whether the product of two spaces of (almost) universal disposition (or
separable universal disposition) has to be isomorphic to a space with the same property.
Part of the problem is that there is no available characterization for the property ``to be isomorphic to a
space of (almost) universal disposition".  One however has:

\begin{proposition}$\;$
\begin{enumerate}
\item $\mathcal G \simeq \mathcal G \oplus \mathcal G$.
\item Under the diamond axiom $\diamondsuit$ (see \cite{shelah}) there is a space $\mathcal S_\diamondsuit$
of density character $\aleph_1$ and of almost universal disposition such that $\mathcal S_\diamondsuit$ is
not isomorphic to $\mathcal S_\diamondsuit \oplus \mathcal S_\diamondsuit$.
However, any ultrapower $(\mathcal S_\diamondsuit)_\U$ is a space of separable universal disposition such that
$(\mathcal S_\diamondsuit)_\U \simeq (\mathcal S_\diamondsuit)_\U \oplus (\mathcal S_\diamondsuit)_\U$.
\item Under {\sf CH},   $\mathscr F_1 \oplus \mathscr F_1$ is isomorphic to a space of separable universal disposition.
\end{enumerate}
\end{proposition}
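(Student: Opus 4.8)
The plan is to treat the three items separately: the first by a direct decomposition, the third by transporting the first through an ultrapower, and the second by combining a $\diamondsuit$-construction with the third.

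For (1) I would avoid the full Pe\l czy\'nski machinery and argue directly. Put $Z=\left(\sum_{n}\mathcal G\right)_{c_0}$. Since the dual of a $c_0$-sum is the corresponding $\ell_1$-sum, $Z^*$ is an $\ell_1$-sum of copies of $\mathcal G^*\cong L_1(\mu)$, hence again an $L_1$-space; thus $Z$ is a separable Lindenstrauss space. By the quoted fact that every separable Lindenstrauss space is $1$-complemented in $\mathcal G$, we may write $\mathcal G\simeq Z\oplus W$ for some $W$. On the other hand, peeling one coordinate off the $c_0$-sum gives $Z\simeq \mathcal G\oplus Z$, so
$$\mathcal G\simeq Z\oplus W\simeq(\mathcal G\oplus Z)\oplus W\simeq \mathcal G\oplus(Z\oplus W)\simeq \mathcal G\oplus\mathcal G .$$
This settles (1) and is entirely routine. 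For (3) I would transport this through an ultrapower. Fix a free ultrafilter $\U$ on $\N$. As the ultrapower of the almost universally disposed space $\mathcal G$, the space $\mathcal G_\U$ is of separable universal disposition (a known consequence of the methods of \cite{accgm2}), and its density character equals $\mathfrak c=\aleph_1$ under {\sf CH}; by the {\sf CH} uniqueness statement it is therefore isometric to $\mathscr F_1$. Since ultrapowers commute with finite direct sums and preserve isomorphisms, part (1) yields
$$\mathscr F_1\oplus\mathscr F_1\simeq \mathcal G_\U\oplus\mathcal G_\U\simeq(\mathcal G\oplus\mathcal G)_\U\simeq\mathcal G_\U\simeq\mathscr F_1 ,$$
and $\mathscr F_1$ is a space of separable universal disposition, which is exactly the assertion of (3).

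The real work is in (2), and I expect the existence of $\mathcal S_\diamondsuit$ to be the \emph{main obstacle}. I would run the push-out construction of length $\omega_1$ producing a space of almost universal disposition of density $\aleph_1$, but interleave it with a $\diamondsuit$-guessing of the potential operators on the emerging union, arranging at successor stages that every such operator be absorbed into a scalar multiple of the identity modulo an operator of separable range; this is the Shelah-style ingredient behind \cite{shelah}. Granting that every $T\in\mathcal L(\mathcal S_\diamondsuit)$ has the form $\lambda I+S$ with $S$ of separable range, suppose $\mathcal S_\diamondsuit\simeq\mathcal S_\diamondsuit\oplus\mathcal S_\diamondsuit$; then there is a bounded idempotent $P$ whose range and kernel are both nonseparable. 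Writing $P=\lambda I+S$ and using $P^2=P$ forces $(\lambda^2-\lambda)I$ to have separable range, hence $\lambda\in\{0,1\}$ because $\mathcal S_\diamondsuit$ is nonseparable; but then one of $P,\ I-P$ has separable range, so one of the two complemented halves is separable, a contradiction. Thus $\mathcal S_\diamondsuit\not\simeq\mathcal S_\diamondsuit\oplus\mathcal S_\diamondsuit$.

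For the ultrapower statement in (2), the space $(\mathcal S_\diamondsuit)_\U$ is again of separable universal disposition, being an ultrapower of an almost universally disposed space; and since $\diamondsuit$ implies {\sf CH}, it has density $\aleph_1$, so it is isometric to $\mathscr F_1$. Part (3) then gives
$$(\mathcal S_\diamondsuit)_\U\simeq\mathscr F_1\simeq\mathscr F_1\oplus\mathscr F_1\simeq(\mathcal S_\diamondsuit)_\U\oplus(\mathcal S_\diamondsuit)_\U .$$
The hardest point throughout is precisely the $\diamondsuit$-controlled construction delivering a space that is simultaneously of almost universal disposition and carries only scalar-plus-separable operators; once that is in hand, everything else is a soft transfer of it and of (1) through the ultrapower functor and the {\sf CH} uniqueness of $\mathscr F_1$.
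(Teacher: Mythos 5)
Your proposal is correct and follows essentially the same route as the paper: the fact that $c_0(\mathcal G)$ is a separable Lindenstrauss space $1$-complemented in $\mathcal G$ plus the Pe\l czy\'nski-style decomposition for (1), Shelah's $\diamondsuit$-space with the scalar-plus-separable-range operator property for (2), and the identification $\mathscr F_1 \simeq \mathcal G_\U$ under {\sf CH} together with $(\mathcal G\oplus\mathcal G)_\U\simeq \mathcal G_\U\oplus\mathcal G_\U$ for (3). Your only departures are expository --- you write out the decomposition identity that the paper invokes as Pe\l czy\'nski's method, and you supply the (correct) idempotent computation showing $P=\lambda I+S$ with $P^2=P$ forces $\lambda\in\{0,1\}$ and hence a separable half, which the paper leaves as a bare assertion --- while for the existence of $\mathcal S_\diamondsuit$ you are, like the paper, ultimately citing \cite{shelah} rather than reconstructing it.
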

\begin{proof}
(1) That  $\mathcal G \oplus \mathcal G\simeq \mathcal G$ can be found in \cite[p.133]{accgmLN}: since
$c_0(\mathcal G)$ is a Lindenstrauss space, it is $1$-complemented in $\mathcal G$; and $\mathcal G$ is
obviously $1$-complemented in $c_0(\mathcal G)$.
By Pe\l cy\'nski's decomposition method $c_0(\mathcal G) \simeq \mathcal G$ and, in particular,
$\mathcal G \oplus \mathcal G\simeq \mathcal G$.
\medskip

(2) The space $\mathcal S_\diamondsuit$ is the one constructed by Shelah \cite[Theorem 5.1]{shelah}.
It has the property labeled (C) that every operator $\mathcal S_\diamondsuit \to \mathcal S_\diamondsuit $ has
the form $\lambda I + S$ for some scalar $\lambda$ and some separable range operator $S$.
This makes impossible an isomorphism $\mathcal S_\diamondsuit \oplus \mathcal S_\diamondsuit \simeq \mathcal S_\diamondsuit$.

The space $\mathcal S_\diamondsuit$ is of almost universal disposition by construction, and consequently
$(\mathcal S_\diamondsuit)_\U$ is of separable universal disposition. Since the diamond axiom $\diamondsuit$ implies {\sf CH},
$\mathscr F_1 \simeq (\mathcal S_\diamondsuit)_\U$, and we prove next that $\mathscr F_1 \oplus \mathscr F_1 \simeq \mathscr F_1$.
\medskip

(3) Under {\sf CH}, all spaces of separable universal disposition having density character at most $\aleph_1$ are isometric
\cite{accgmLN}. Therefore $\mathscr F_1 \simeq \mathcal G_\U$ for every countably incomplete ultrafilter on $\N$. Thus,
$$
\mathscr F_1 \simeq \mathcal G_\U \simeq (\mathcal G \oplus \mathcal G)_\U \simeq \mathcal G _\U \oplus \mathcal G _\U \simeq
 \mathscr F_1 \oplus \mathscr F_1
$$
\end{proof}

As we have already said, we have been unable to settle the following
\medskip

\noindent \textbf{Problem.}
\emph{Is the product of spaces of universal disposition (isomorphic to) a space of universal disposition?}
\medskip

Observe that, even under {\sf CH}, there are at least three different non isomorphic spaces of universal disposition:
$\mathscr F_1, \mathscr F_0$ (see \cite{accgmLN}) and $\mathscr F(C(\Delta))$, where $C(\Delta)$ is a space of continuous
functions on a compact that admits a representation
$$\begin{CD}
0 @>>> c_0 @>>> C(\Delta) @>>> c_0(\aleph_1)@>>>0
\end{CD}$$
(see \cite{castsimo}). The spaces $\mathscr F_1$ and $\mathscr F_0$ are very different since $\mathscr  F_1$ is $1$-separably
injective while $\mathscr F_0$ is not even separably injective; $\mathscr F_1$ is a Grothendieck space while every copy of
$c_0$ inside $\mathscr F_0$ is complemented.

The space $\mathscr F(C(\Delta))$ is different from the other two since it contains complemented and uncomplemented copies
of $c_0$. It is clear that $\mathscr F_1 \oplus \mathscr F_0$ cannot be isomorphic to either $\mathscr F_1$ (since it cannot
be separably injective) or to $\mathscr F_0$ (since it contains uncomplemented copies of $c_0$).
Is it a space of universal disposition?

We conjecture that it is not and that, consequently, the answer to the Problem is negative. Recall that it is also an open
problem whether there exist a continuum of mutually non-isomorphic spaces of universal disposition.
It is also likely that the spaces $\mathscr F(\mathscr F_0)$, $\mathscr F(\mathscr F_1)$ and $\mathscr F(\mathscr F(C(\Delta)))$
are non isomorphic, which would yield an infinite sequence of non-isomorphic spaces of universal disposition.



\begin{thebibliography}{33}

\bibitem{accgm2} A. Avil\'es, F. Cabello S\'anchez, J.M.F. Castillo, M. Gonz\'alez, Y. Moreno,
\emph{Banach spaces of universal disposition}. J. Funct. Anal. 261 (2011) 2347--2361.


\bibitem{accgmLN} A. Avil\'es, F. Cabello S\'anchez, J.M.F. Castillo, M. Gonz\'alez, Y. Moreno,
\emph{Separably injective Banach spaces}. Lecture Notes in Math. 2132 Springer. 2016

\bibitem{accgmLNc} A. Avil\'es, F. Cabello S\'anchez, J.M.F. Castillo, M. Gonz\'alez, Y. Moreno,
\emph{ Corrigendum to ``On separably injective Banach spaces [Adv. Math. 234 (2013) 192-216]}
Adv. in Math. 318 (2017) 737 --747.


\bibitem{bagekuba} C. Bargetz, K. K\c{a}kol, W. Kubi\'{s}, \emph{A separable Fr\'echet space of
almost universal disposition}, J. Funct. Anal. 272 (2017) 1876--1891.

\bibitem{ccky} F. Cabello S\'{a}nchez, J.M.F. Castillo, N.J. Kalton, D.T. Yost,
\emph{Twisted sums with $C(K)$ spaces,} Trans. Amer. Math. Soc. 355 (2003) 4523-4541.

\bibitem{cgk} F. Cabello S\'anchez, J. Garbuli\'nska, W. Kubi\'{s}, \emph{Quasi-Banach spaces of
almost universal disposition}, J. Funct. Anal. 267 (2014), 744--771.

\bibitem{ccm} F. Cabello S\'anchez, J.M.F. Castillo, Y. Moreno, \emph{On the bounded approximation
property on subspaces of $\ell_p$  when $0<p<1$ and related issues}, Forum Math. (to appear).

\bibitem{casazza} P.G. Casazza, \emph{Approximation properties.} Handbook of the geometry of
Banach spaces, Vol. I; pp. 271--316. North-Holland, Amsterdam,
2001.

\bibitem{castgonzV} J.M.F. Castillo, M. Gonz\'alez, \emph{The properties (V) and (u) are not
three-space properties}, Glasgow Math. J. 36 (1994) 297-299.

\bibitem{castgonz} J.M.F. Castillo,  M. Gonz\'alez, \emph{Three-space problems in Banach space theory},
Lecture Notes in Math. 1667, Springer 1997.

\bibitem{castmoreka} J.M.F. Castillo, Y. Moreno \emph{Quasi-Banach spaces of almost universal
complemented disposition}, Q. Journal Math. (to appear).

\bibitem{castsimomore} J.M.F. Castillo, Y. Moreno, M.A. Simoes, \emph{ $1$-complemented subspaces
of Banach spaces of Universal disposition}, New York J. Math. 24 (2018) 251--260.

\bibitem{castsimoV} J.M. F. Castillo, M.A. Sim\~{o}es,
\emph{Property $(V)$ is still not a three-space property}, Extracta Math. 27 (2012) 5-11.

\bibitem{castsimo} J.M.F. Castillo, M.A. Sim\~{o}es, \emph{On Banach spaces of universal disposition},
New York J. Math. 22  (2016) 605-61.

\bibitem{garbula} J. Garbuli\'nska, \emph{Isometric uniqueness of a complementably universal Banach
space for Schauder decompositions}, Banach J. Math. Anal. 8 (2014) 211-220.

\bibitem{gk} J. Garbuli\'nska and W. Kubi\'{s}, \emph{Remarks on Gurariy space},
Extracta Math. 26, (2011) 235 -- 269.

\bibitem{gevo} Yu L. Gevorkyan, \emph{The universality of spaces of almost universal
displacement}, (Russian) Funkcional. Anal. i Prilo\u{z}en 8 (1974), no. 2, 72.

\bibitem{godekalt} G. Godefroy, N. J. Kalton, \emph{Isometric embeddings and universal spaces}.
Extracta Math. 22(2) (2007) 179-189

\bibitem{gurari} V.I. Gurariy, \emph{Spaces of universal placement, isotropic spaces and a problem
of Mazur on rotations of Banach spaces (Russian).} Sibirsk. Mat. \v Z. 7 (1966) 1002--1013.


\bibitem{Gurariinew} V.I. Gurariy  and W. Lusky. \emph{Geometry of M\"untz spaces and related questions.}
Lecture Notes in Math., 1870. Springer-Verlag, 2005.

\bibitem{kadec} M. I. Kadec,  \emph{On complementably universal Banach spaces}, Studia Math. 40
(1971) 85--89.

\bibitem{kubis}  W. Kubi\'{s}, \emph{ Fra\"iss\'e sequences: category-theoretic approach to universal
homogeneous structures}. Ann. Pure Appl. Logic 165 (2014), no. 11, 1755--1811.

\bibitem{kubisfo} W. Kubi\'{s},  \emph{Injective objects and retracts of Fra\"iss\'e limits},
Forum Math. 27 (2015) 807-842

\bibitem{kuso} W. Kubi\'{s}, S. Solecki \emph{A proof of uniqueness of Gurariy space},
Israel J. Math. 195 (2013) 449--456.

\bibitem{lazarlind} A.J. Lazar and J. Lindenstrauss, \emph{Banach spaces whose duals are $L_1$-spaces
and their representing matrices.} Acta Math. 126 (1971) 165--193

\bibitem{lusky} W. Lusky, \emph{The Gurari\u \i\ spaces are unique.} Arch. Math. 27 (1976) 627-635.

\bibitem{luskysurv} W. Lusky, \emph{Separable Lindenstrauss spaces.} Functional Analysis: Surveys
and Recent Results. North-Holland 1977, pp. 15--28.

\bibitem{pelcunc} A.~Pe{\l}czy\'nski, \emph{Banach spaces on which every unconditionally converging
operator, is weakly compact}. Bull. Acad. Pol. Sci.10, 641--648 (1962).

\bibitem{p-w} A.~Pe{\l}czy\'nski and P. Wojtaszczyk, \emph{Banach spaces with finite dimensional
expansions of identity and universal bases of finite dimensional spaces}.
Studia Math. XL (1971) 91--108.

\bibitem{shelah} S. Shelah, \emph{Uncountable constructions for B.A.e.c. groups and Banach spaces},
Israel J. Math 51, (1985) 273-297.

\bibitem{woj} P. Wojtaszczyk, \emph{Some remarks on the Gurarij space},
Studia Math.  XLI (1972), 207--210.

\end{thebibliography}
\end{document}